\newcommand{\bk}{\backslash}
\newcommand{\lm}{\lambda}
\newcommand{\mt}{\widetilde{m}}
\newtheorem{lemma}[]{Lemma}
\newtheorem{cor}[lemma]{Corollary}
\newtheorem{theorem}[lemma]{Theorem}
\title{A complete multipartite basis for the chromatic symmetric function}
\author{Logan Crew, Sophie Spirkl\footnote{Department of Combinatorics \& Optimization, University of Waterloo, Waterloo, ON, N2L 3E9.\newline  Emails: lcrew@uwaterloo.ca,  sspirkl@uwaterloo.ca. \newline  This material is based upon work supported by the National Science Foundation under Award No. DMS-1802201. \newline We acknowledge the support of the Natural Sciences and Engineering Research Council of Canada (NSERC), [funding reference number RGPIN-2020-03912]. \newline
Cette recherche a été financée par le Conseil de recherches en sciences naturelles et en génie du Canada (CRSNG), [numéro de référence RGPIN-2020-03912].}}
\date{\today}
\begin{document}

\maketitle

\begin{abstract}

In the vector space of symmetric functions, the elements of the basis of elementary symmetric functions are (up to a factor) the chromatic symmetric functions of disjoint unions of cliques. We consider their graph complements, the functions $\{r_{\lm}: \lm \text{ an integer partition}\}$ defined as chromatic symmetric functions of complete multipartite graphs. This basis was first introduced by Penaguiao \cite{raul}. We provide a combinatorial interpretation for the coefficients of the change-of-basis formula between the $r_{\lm}$ and the monomial symmetric functions, and we show that the coefficients of the chromatic and Tutte symmetric functions of a graph $G$ when expanded in the $r$-basis enumerate certain intersections of partitions of $V(G)$.
    
\end{abstract}

\section{Introduction}

The theory of symmetric functions was reenergized in the 1980s by Macdonald, who discovered and collected information about bases and operators of various spaces of symmetric functions in his seminal works \cite{macpaper, mac}. This has led to a deeper understanding of symmetric group representations and algebraic geometry \cite{garsia, factorial}, and remains a very active area of research \cite{deltazero, delta, zabrocki}. Stanley connected this theory with colorings of a graph $G$ by introducing the chromatic symmetric function $X_G$ and the Tutte symmetric function $XB_G$ and showing that the coefficients of these functions in different symmetric function bases encode information about the underlying graph, well beyond the information encoded by the chromatic and Tutte polynomials \cite{stanley, stanley2}. Research on the chromatic symmetric function is also extremely active, with primary focus on the Stanley-Stembridge conjecture that unit interval graphs are $e$-positive \cite{huh, epos, dahl, foley}, the related conjecture that claw-free graphs are $s$-positive \cite{pau, paw, wang} and the Tree Isomorphism conjecture that nonisomorphic trees are distinguished by the chromatic symmetric function \cite{trees, heil}.  

Very recently, Penaguaio \cite{raul} demonstrated that the triangular modular relations for the chromatic symmetric function discovered by Orellana and Scott \cite{ore} (along with graph isomorphism) generate the kernel of the map induced by the chromatic symmetric function from the Hopf algebra of vertex-labelled graphs to symmetric functions. In other words, Penaguiao showed that in the Hopf algebra, a graph may be transformed into any other graph with the same chromatic symmetric function by applying a finite number of vertex relabellings and modular relations. The proof of this fact uses the triangular relation to reduce every chromatic symmetric function of a graph to a linear combination of those of complete multipartite graphs, suggesting that these particular functions may be interesting in this context.  Any complete multipartite graph is determined by the sizes of its disjoint maximal stable sets, so such graphs are naturally indexed by integer partitions.  Thus, Penaguiao introduced the family $r_{\lm}$ as the chromatic symmetric functions of these graphs\footnote{The authors are not aware of why Penaguiao chose the letter $r$. Some possiblilites we conjecture are that $r$ stands for \emph{rainbow}, \emph{Raul}, or \emph{really cool}.}, and wondered if they may carry further meaning in the context of all chromatic symmetric functions.

Simultaneously, the authors introduced a vertex-weighted version of the chromatic symmetric function \cite{delcon}, and showed that if for each integer partition $\lm = (\lm_1, \dots, \lm_k)$, the graph $K^{\lm}$ is the complete graph with $k$ vertices of weights $\lm_1, \dots, \lm_k$, then $X_{K^{\lm}}$ is equal to a multiple of the monomial symmetric function $m_{\lm}$. Moreover, the chromatic symmetric function of the graph complement $\overline{K^{\lm}}$ is equal to the power-sum symmetric function $p_{\lm}$. Given that the only other fundamental basis that may be written as (a multiple of) the chromatic symmetric function of a single graph is that of the elementary symmetric functions $e_{\lm}$ \cite{cho}, it naturally follows that the graph complements of these $e$-basis elements may be of interest. These complementary functions are none other than the $r_{\lm}$ described by Penaguiao.

In this paper, we show how this basis fits naturally with well-known symmetric function bases by showing that each $r_{\lm}$ expands into the $m$-basis with coefficients counting set partitions, and that each $m_{\lm}$ expands into the $r$-basis with coefficients enumerating necklace partitions, which add a cyclic ordering to the blocks of a set partition.  We also demonstrate a reciprocity theorem relating these coefficients to those arising from changing between the $p$- and $e$-bases. We show that the chromatic symmetric function of a graph $G$ expands into the $r$-basis with coefficients counting intersections of maximal stable partitions of $G$ by deriving a more general expression for the coefficients of the Tutte symmetric function expressed in the $r$-basis.

This paper will be structured as follows. In Section 2, we review necessary background on symmetric functions, graphs, and graph colorings, particularly the chromatic symmetric function. In Section 3, we define the symmetric functions $r_{\lm}$ and find combinatorial interpretations for the coefficients of the $r$-basis expansion of the chromatic symmetric function $X_G$ and the Tutte symmetric function $XB_G$. In Section 4, we give an interpretation of the entries of the transition matrices between the $r$-basis and the monomial basis, and show reciprocity relations with the transition matrices between the elementary and power sum bases.  Finally, in Section 5, we consider some further directions for research. 

\section{Background}

An \emph{integer partition} (or just \emph{partition}) is a tuple $\lm = (\lm_1,\dots,\lm_k)$ of positive integers such that $\lm_1 \geq \dots \geq \lm_k$.  The integers $\lm_i$ are the \emph{parts} of $\lm$.  If $\sum_{i=1}^k \lm_i = n$, we say that $\lm$ is a partition of $n$, and we write $\lm \vdash n$, or $|\lm| = n$.  The number of parts $k$ is the \emph{length} of $\lm$, and is denoted by $l(\lm)$.  The number of parts equal to $i$ in $\lm$ is given by $n_i(\lm)$.

A \emph{set partition} of a set $S$ is a set of nonempty \emph{blocks} $B_1,\dots,B_k$ such that each $B_i \subseteq S$, the blocks are pairwise disjoint, and their union is $S$. If $\pi$ is a partition of $S$, we write $\pi \vdash S$. The set $\{1,2,\dots,n\}$ will be denoted by $[n]$.

  A function $f(x_1,x_2,\dots) \in \mathbb{R}[[x_1,x_2,\dots]]$ is \emph{symmetric} if $f(x_1,x_2,\dots) = f(x_{\sigma(1)},x_{\sigma(2)},\dots)$ for every permutation $\sigma$ of the positive integers $\mathbb{N}$.  The \emph{algebra of symmetric functions} $\Lambda$ is the subalgebra of $\mathbb{R}[[x_1,x_2,\dots]]$ consisting of those symmetric functions $f$ that are of bounded degree (that is, there exists a positive integer $n$ such that every monomial of $f$ has degree $\leq n$).  Furthermore, $\Lambda$ is a graded algebra, with natural grading
  $$
  \Lambda = \bigoplus_{k=0}^{\infty} \Lambda^d
  $$
  where $\Lambda^d$ consists of symmetric functions that are homogeneous of degree $d$ \cite{mac,stanleybook}.

  Each $\Lambda^d$ is a finite-dimensional vector space over $\mathbb{R}$, with dimension equal to the number of partitions of $d$ (and thus, $\Lambda$ is an infinite-dimensional vector space over $\mathbb{R}$).  Some commonly-used bases of $\Lambda$ that are indexed by partitions $\lm = (\lm_1,\dots,\lm_k)$ include:
\begin{itemize}
  \item The monomial symmetric functions $m_{\lm}$, defined as the sum of all distinct monomials of the form $x_{i_1}^{\lm_1} \dots x_{i_k}^{\lm_k}$ with distinct indices $i_1, \dots, i_k$.

  \item The power-sum symmetric functions, defined by the equations
  $$
  p_n = \sum_{k=1}^{\infty} x_k^n, \hspace{0.3cm} p_{\lm} = p_{\lm_1}p_{\lm_2} \dots p_{\lm_k}.
  $$
  \item The elementary symmetric functions, defined by the equations
  $$
  e_n = \sum_{i_1 < \dots < i_n} x_{i_1} \dots x_{i_n}, \hspace{0.3cm} e_{\lm} = e_{\lm_1}e_{\lm_2} \dots e_{\lm_k}.
  $$
  \item The homogeneous symmetric functions, defined by the equations
  $$
  h_n = \sum_{i_1 \leq \dots \leq i_n} x_{i_1} \dots x_{i_n}, \hspace{0.3cm} h_{\lm} = h_{\lm_1}h_{\lm_2} \dots h_{\lm_k}.
  $$
\end{itemize}

  We also make use of the \emph{augmented monomial symmetric functions}, defined by 
  $$
  \mt_{\lm} = \left(\prod_{i=1}^{\infty} n_i(\lm)!\right)m_{\lm}.
  $$
 
 Given a symmetric function $g$ and a symmetric function basis $\{f_{\lm}\}$ indexed by partitions $\lm$, we use the notation $[f_{\mu}]g$ to mean the coefficient of $f_{\mu}$ when expressing $g$ in the $f$-basis.  This notation may also be used analogously for other functions (e.g. if $p(x)$ is a polynomial, we may use $[x^j]p(x)$ to denote its coefficient of $x^j$).

  A \emph{graph} $G = (V,E)$ consists of a \emph{vertex set} $V$ and an \emph{edge multiset} $E$ where the elements of $E$ are pairs of (not necessarily distinct) elements of $V$.  An edge $e \in E$ that contains the same vertex twice is called a \emph{loop}.  If there are two or more edges that each contain the same two vertices, they are called \emph{multi-edges}.  A \emph{simple graph} is a graph $G = (V,E)$ in which $E$ does not contain loops or multi-edges (thus, $E \subseteq \binom{V}{2}$).  If $\{v_1,v_2\}$ is an edge (or nonedge), we will write it as $v_1v_2 = v_2v_1$.  The vertices $v_1$ and $v_2$ are the \emph{endpoints} of the edge $v_1v_2$.  We will use $V(G)$ and $E(G)$ to denote the vertex set and edge multiset of a graph $G$, respectively.
  
  Two graphs $G$ and $H$ are said to be \emph{isomorphic} if there exists a bijective map $f: V(G) \rightarrow V(H)$ such that for all $v_1,v_2 \in V(G)$ (not necessarily distinct), the number of edges $v_1v_2$ in $E(G)$ is the same as the number of edges $f(v_1)f(v_2)$ in $E(H)$.

  The \emph{complement} of a simple graph $G = (V,E)$ is denoted $\overline{G}$, and is defined as $\overline{G} = (V, \binom{V}{2} \bk E)$, so in $\overline{G}$ every edge of $G$ is replaced by a nonedge, and every nonedge is replaced by an edge.

  A \emph{subgraph} of a graph $G$ is a graph $G' = (V',E')$ where $V' \subseteq V$ and $E' \subseteq E|_{V'}$, where $E|_{V'}$ is the set of edges with both endpoints in $V'$.  An \emph{induced subgraph} of $G$ is a graph $G' = (V',E|_{V'})$ with $V' \subseteq V$.  The induced subgraph of $G$ using vertex set $V'$ will be denoted $G|_{V'}$.  A \emph{stable set} of $G$ is a subset $V' \subseteq V$ such that $E|_{V'} = \emptyset$.  A \emph{clique} of $G$ is a subset $V' \subseteq V$ such that for every pair of distinct vertices $v_1$ and $v_2$ of $V'$, $v_1v_2 \in E(G)$.
  
  The \emph{complete graph} $K_n$ on $n$ vertices is the unique simple graph having all possible edges, that is, $E(K_n) = \binom{V}{2}$ where $V = V(K_n)$.

  Let $G = (V(G),E(G))$ be a (not necessarily simple) graph.  A map $\kappa: V(G) \rightarrow \mathbb{N}$ is called a \emph{coloring} of $G$.  This coloring is called \emph{proper} if $\kappa(v_1) \neq \kappa(v_2)$ for all $v_1,v_2$ such that there exists an edge $e = v_1v_2$ in $E(G)$.  The \emph{chromatic symmetric function} $X_G$ of $G$ is defined as
  
  \begin{equation}\label{eq:xgdef}
  X_G(x_1,x_2,\dots) = \sum_{\kappa} \prod_{v \in V(G)} x_{\kappa(v)}
  \end{equation} 
  where the sum runs over all proper colorings $\kappa$ of $G$.  Note that if $G$ contains a loop then $X_G = 0$, and $X_G$ is unchanged by replacing each multi-edge by a single edge.
  
  For a graph $G$ and an integer partition $\lm = (\lm_1,\dots,\lm_k)$, let $Stab_{\lm}(G)$ denote the set of (unordered) partitions of $V(G)$ into disjoint stable sets of sizes $\lm_1, \dots, \lm_k$. It is easy to see from \eqref{eq:xgdef} that we have the $\mt$-basis expansion \cite{stanley}
  \begin{equation}\label{eq:xgmt}
X_G = \sum_{\lm \vdash |V(G)|} |Stab_{\lm}(G)|\mt_{\lm}.
  \end{equation}

\begin{section}{Expanding the chromatic and Tutte symmetric functions using a multipartite basis}

For any integer partition $\lm = (\lm_1, \dots, \lm_k)$, define $G_{\lm}$ to be the simple graph with $V(G_{\lm}) = \{v_{11},\dots,v_{1 \lm_1},v_{21},\dots,v_{2\lm_2},v_{31},\dots,v_{k\lm_k}\}$ (thus $|V(G_{\lm})| = |\lm|)$, and $E(G_{\lm}) = \{v_{ij}v_{ab} \hspace{0.1cm} | \hspace{0.1cm} i \neq a\}$. Thus, $G_{\lm}$ is a \emph{complete multipartite graph}, meaning it consists of $k$ disjoint stable sets of sizes $\lm_1,\dots,\lm_k$, as well as all possible edges connecting vertices from different stable sets. Define the \emph{complete multipartite symmetric functions} by
$$
r_{\lambda} = X_{G_{\lm}}.
$$
Note that $X_{\overline{G_{\lm}}} = (\prod_{i=1}^{k} \lm_i!)e_{\lm}$, so the $r_{\lm}$ are in some sense the graph complement of the $e_{\lm}$.  Penaguiao showed that for each $d$, the set $\{r_{\lm}: \lm \vdash d\}$ is a basis\footnote{We provide a proof similar to Penaguiao's in Section 4.} for $\Lambda^d$ \cite{raul}.

Before exploring how the $r$-basis is related to other fundamental bases of $\Lambda$, we illustrate its usefulness by giving a combinatorial interpretation to the $r$-basis expansion of the both the chromatic and Tutte symmetric functions, answering a question of Penaguiao \cite{raul}.  

If $\pi$ is a set partition, let $\lm(\pi)$ denote the integer partition whose parts are the sizes of the blocks of $\pi$. The \emph{Tutte symmetric function} of a graph $G$ may be defined \cite{cstutte, jo} as
\begin{equation}\label{eq:xbtom}
XB_G(t;x_1,x_2,\dots) = \sum_{\pi \vdash V(G)} (1+t)^{e(\pi)}\mt_{\lm(\pi)}
\end{equation}
where $e(\pi)$ is the number of edges of $G$ that have both endpoints in the same block of $\pi$. Using the convention $0^0 = 1$, it is easy to see that setting $t = -1$ we get
$$
XB_G(-1;x_1,x_2,\dots) = X_G(x_1,x_2,\dots).
$$

Note that $XB_G$ is an element in the expanded ring $\Lambda[t]$, which may be considered either as polynomials in $t$ with symmetric function coefficients, or symmetric functions with coefficients in $\mathbb{R}[t]$. In particular, the set $\{1,(1+t),(1+t)^2,\dots\}$ forms a basis for $\mathbb{R}[t]$ as a vector space, and so given a basis $\{f_{\lm}\}$ for $\Lambda$, we may write each element of $\Lambda^d[t]$ uniquely as
$$\sum_{i \in \mathbb{N}} \sum_{\lm \vdash d} c_{\lm,i}(1+t)^if_{\lm}$$ for $c_{\lm,i} \in \mathbb{R}$. Since each element of $\Lambda[t]$ has bounded degree in $t$ and the $x_i$, the expressions $[f_{\lm}]XB_G$, $[(1+t)^i]XB_G$, and  $[(1+t)^if_{\lm}]XB_G$ are all well-defined.

The set $\Pi(G)$ of all partitions of $V(G)$ is a poset when equipped with the partial order relation $\pi_1 \leq \pi_2$ if $\pi_1$ is a refinement of $\pi_2$ (meaning each block of $\pi_1$ is entirely contained in a block of $\pi_2$).  Then $\Pi(G)$ has as the unique maximal element the partition $\hat{1}$ with one block containing all vertices, and as the unique minimal element $\hat{0}$ the partition with each vertex in its own block. For $\pi, \rho \in \Pi(G)$, let $\pi \wedge \rho$ denote the partition such that vertices $a$ and $b$ are in the same block if and only if they are in the same block in both $\pi$ and $\rho$. A partition $\pi \in \Pi(G)$ is called \emph{$j$-maximal} if $e(\pi) = j$, and there exists at least one edge between every pair of blocks of $\pi$. Thus, a $j$-maximal partition has $j$ edges inside of blocks and cannot be coarsened without adding additional such edges.

Before giving a combinatorial interpretation for the $r$-basis coefficients of $XB_G$, we prove an auxiliary lemma. 

\begin{lemma}\label{lem:lemcool}
Let $P$ be a partition of a set $S$ of size $n$.  Then for every $\mu \vdash n$, we have
\begin{equation}\label{eq:lemcool}
\sum_{\pi \leq P} [r_{\mu}]\mt_{\lm(\pi)} = \delta_{\lm(P), \mu}.
\end{equation}
\end{lemma}

\begin{proof}
Recall that for an integer partition $\lm = (\lm_1,\dots,\lm_k)$, we define $G_{\lm}$ to have stable sets of size $\lm_1,\dots,\lm_k$, and all possible edges between vertices in distinct stable sets. Label the vertices of $G_{\lm(P)}$ with elements of $S$ such that the blocks of the stable partition in $G_{\lm(P)}$ of type $\lm(P)$ correspond to the blocks of $P$. Note that for any $\pi \vdash S$, we have $\pi \leq P$ if and only if the corresponding $\pi \vdash V(G_{\lm(P)})$ is stable. Since by construction $X_{G_{\lm(P)}} = r_{\lm(P)}$, we have
$$
\sum_{\pi \leq P} [r_{\mu}]\mt_{\lm(\pi)} = \sum_{\substack{\pi \vdash G_{\lm(P)} \\ \pi \text{ stable}}} [r_{\mu}]\mt_{\lm(\pi)} = [r_{\mu}]X_{G_{\lm(P)}} = \delta_{\lm(P), \mu}.
$$
\end{proof}

We now prove the main theorem of this section.

\begin{theorem}\label{thm:sweet}
Let $j$ be a nonnegative integer, and suppose that $\{M_1,\dots,M_k\}$ is the set of all partitions of $V(G)$ such that each $M_t$ is $i$-maximal for some $0 \leq i \leq j$.  Then
$$
\sum_{i=0}^j [(1+t)^i]XB_G = \sum_{\textnormal{nonempty}\, S \subseteq [k]} (-1)^{|S|-1}r_{\lm(\wedge_{i \in S} M_i)}.
$$
\end{theorem}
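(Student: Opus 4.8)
The plan is to peel off the coefficient of each power $(1+t)^i$ from the definition \eqref{eq:xbtom}, rewrite the left-hand side as a sum of augmented monomials over all set partitions with few monochromatic edges, recognize that index set as a union of principal down-sets in the partition lattice $\Pi(G)$, and then pass to the $r$-basis by inclusion--exclusion using the Lemma.

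Concretely, \eqref{eq:xbtom} gives $[(1+t)^i]XB_G = \sum_{\pi \vdash V(G),\, e(\pi)=i}\mt_{\lm(\pi)}$, so the left-hand side of the theorem equals $\sum_{\pi \vdash V(G),\, e(\pi)\leq j}\mt_{\lm(\pi)}$. The combinatorial heart of the argument is the identity of index sets
\[
\{\pi \vdash V(G): e(\pi)\leq j\} \;=\; \bigcup_{t=1}^{k}\{\pi \vdash V(G): \pi \leq M_t\}.
\]
For $\supseteq$, if $\pi \leq M_t$ with $M_t$ being $i$-maximal for some $i\leq j$, then every edge with both ends in one block of $\pi$ also has both ends in one block of $M_t$, so $e(\pi)\leq e(M_t)=i\leq j$. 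For $\subseteq$, given $\pi$ with $e(\pi)\leq j$, repeatedly merge any two blocks spanned by no edge of $G$; each such merge keeps $e(\cdot)$ fixed, the process terminates, and the terminal partition $M$ satisfies $\pi\leq M$ and has an edge between every pair of its blocks, i.e.\ $M$ is $e(\pi)$-maximal, hence equals some $M_t$ because $e(\pi)\leq j$.

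With this in hand, apply inclusion--exclusion to $\sum\mt_{\lm(\pi)}$ over the union; since $\Pi(G)$ is a lattice, $\bigcap_{i\in S}\{\pi\leq M_i\}=\{\pi\leq \wedge_{i\in S}M_i\}$, giving
\[
\sum_{\substack{\pi\vdash V(G)\\ e(\pi)\leq j}}\mt_{\lm(\pi)} \;=\; \sum_{\emptyset\neq S\subseteq[k]}(-1)^{|S|-1}\sum_{\pi\leq \wedge_{i\in S}M_i}\mt_{\lm(\pi)}.
\]
Finally, the Lemma --- equation \eqref{eq:lemcool}, read coefficient-by-coefficient in the $r$-basis of $\Lambda^{|V(G)|}$ and using linearity of $[r_\mu](\cdot)$ --- says that for any partition $P$ of $V(G)$ one has $\sum_{\pi\leq P}\mt_{\lm(\pi)}=r_{\lm(P)}$; equivalently this is just \eqref{eq:xgmt} for the graph $G_{\lm(P)}$, whose stable partitions are precisely the $\pi\leq P$. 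Taking $P=\wedge_{i\in S}M_i$ for each nonempty $S$ yields the stated formula.

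I expect the only real obstacle to be the set-equality step: one must verify that coarsening a partition along non-edges leaves $e(\cdot)$ unchanged and terminates at a maximal partition (for $\subseteq$) and invoke monotonicity of $e$ under refinement (for $\supseteq$). After that the argument is pure bookkeeping, since the inclusion--exclusion runs over a genuine set of partitions, so no $\pi$ is multiply counted, and no termwise cancellation issues arise in $\Lambda[t]$.
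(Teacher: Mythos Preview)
Your argument is correct and follows essentially the same route as the paper's proof: extract the $\mt$-expansion of $\sum_{i=0}^j[(1+t)^i]XB_G$ as a sum over $\{\pi: e(\pi)\le j\}$, identify that set as the union of the down-sets below the $M_t$, run inclusion--exclusion, and collapse each inner sum via the Lemma. The only cosmetic differences are that you work directly with symmetric functions while the paper extracts the $r_\mu$-coefficient first, and that you spell out both inclusions in the set equality $\{\pi:e(\pi)\le j\}=\bigcup_t\{\pi\le M_t\}$ (the coarsening-along-nonedges argument) where the paper simply asserts it.
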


\begin{proof}

It suffices to show that for any $\mu$ 
$$
\sum_{i=0}^j [(1+t)^ir_{\mu}]XB_G = \sum_{\substack{S \subseteq [k] \\ \lm(\wedge_{i \in S} M_i) = \mu}} (-1)^{|S|-1}.
$$

Starting from the $\mt$-basis expansion \eqref{eq:xbtom} we have
$$
XB_G = \sum_{\pi \vdash V(G)} (1+t)^{e(\pi)} \mt_{\lm(\pi)} = \sum_{\pi \vdash V(G)} (1+t)^{e(\pi)} \sum_{\mu} \left([r_{\mu}]\mt_{\lm(\pi)}\right)r_{\mu}.
$$
Thus
\begin{equation}\label{eq:1plust}
\sum_{i=0}^j [(1+t)^ir_{\mu}]XB_G = \sum_{\substack{\pi \vdash V(G) \\ e(\pi) \leq j}} [r_{\mu}]\mt_{\lm(\pi)}.
\end{equation}
Every such $\pi$ must satisfy $\pi \leq M_i$ for all $i$ in some nonempty subset of $[k]$. Thus, we may apply inclusion-exclusion to these subsets and simplify the right-hand side of \eqref{eq:1plust} to
\begin{equation}\label{eq:fin}
\sum_{l=1}^k (-1)^{l-1} \sum_{\substack{ S \subseteq [k] \\ |S| = l}} \, \sum_{\pi \leq \wedge_{i \in S} M_i} [r_{\mu}]\mt_{\lm(\pi)} = \sum_{\textnormal{nonempty} \, S \subseteq [k]} (-1)^{|S|-1}\sum_{\pi \leq \wedge_{i \in S} M_i} [r_{\mu}]\mt_{\lm(\pi)}
\end{equation}
Applying Lemma \ref{lem:lemcool} with $P = \wedge_{i \in S} M_i$ for each nonempty $S \subseteq [k]$ on the right-hand side of \eqref{eq:fin}, we get
$$
\sum_{\textnormal{nonempty} \, S \subseteq [k]} (-1)^{|S|-1}\delta_{\lm(\wedge_{i \in S} M_i),\mu} = 
 \sum_{\substack{S \subseteq [k] \\ \lm(\wedge_{i \in S} M_i) = \mu}} (-1)^{|S|-1}.
$$
as desired.
\end{proof}
We call a partition of $G$ a  \emph{maximal stable partition} if it is $0$-maximal.
\begin{cor}
Let $G$ be a graph, and let $M_1,\dots,M_k$ be the maximal stable partitions of $G$.  Then  
$$
X_G = \sum_{\textnormal{nonempty}\, S \subseteq [k]} (-1)^{|S|-1}r_{\lm(\wedge_{i \in S} M_i)}.
$$
\end{cor}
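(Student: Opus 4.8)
The plan is to obtain this statement as the $j=0$ specialization of Theorem~\ref{thm:sweet}. First I would observe that a partition of $V(G)$ is $i$-maximal for some $0 \le i \le 0$ precisely when it is $0$-maximal, i.e.\ a maximal stable partition; hence with $j = 0$ the collection $\{M_1,\dots,M_k\}$ appearing in Theorem~\ref{thm:sweet} is exactly the list $M_1,\dots,M_k$ of maximal stable partitions of $G$ named in the corollary. Applying the theorem then gives
$$
[(1+t)^0]XB_G \;=\; \sum_{i=0}^{0} [(1+t)^i]XB_G \;=\; \sum_{\textnormal{nonempty}\, S \subseteq [k]} (-1)^{|S|-1}r_{\lm(\wedge_{i \in S} M_i)},
$$
so it remains only to identify the coefficient $[(1+t)^0]XB_G$ with $X_G$.

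For that I would argue in either of two equivalent ways. Directly from the $\mt$-expansion \eqref{eq:xbtom}, the coefficient of $(1+t)^0$ in $XB_G$ is $\sum_{\pi \vdash V(G),\, e(\pi) = 0} \mt_{\lm(\pi)}$; and $e(\pi) = 0$ holds exactly when every block of $\pi$ is a stable set, so this sum equals $\sum_{\lm} |Stab_{\lm}(G)|\mt_\lm = X_G$ by \eqref{eq:xgmt}. Alternatively, one can substitute $t = -1$ into the $(1+t)$-basis expansion of $XB_G$: every term $(1+t)^i$ with $i \ge 1$ vanishes, leaving $XB_G(-1;x_1,x_2,\dots) = [(1+t)^0]XB_G$, which equals $X_G$ by the specialization noted just after \eqref{eq:xbtom}. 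Either way the corollary follows. There is no genuine obstacle here; the only points requiring a moment's care are the bookkeeping that ``$i$-maximal for some $0\le i\le j$'' collapses to ``maximal stable'' when $j=0$, and the verification that the $(1+t)^0$-coefficient of $XB_G$ is truly $X_G$ rather than some other evaluation of $XB_G$.
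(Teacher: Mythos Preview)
Your proof is correct and matches the paper's own argument: it too sets $j=0$ in Theorem~\ref{thm:sweet} and identifies $[(1+t)^0]XB_G$ with $X_G$ via the specialization $t=-1$. Your additional direct verification from \eqref{eq:xbtom} and \eqref{eq:xgmt} is a harmless elaboration of the same point.
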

\begin{proof}
Since $XB_G$ reduces to $X_G$ when $t = -1$, we have that $X_G = [(1+t)^0]XB_G$, so the proof follows from letting $j = 0$ in Theorem \ref{thm:sweet}.
\end{proof}

\section{Relationships between the $r$-basis and other symmetric function bases}

We first consider the relationship of the $r$-basis to the $\mt$-basis. To do so, we will often work in a slightly modified version of the ring of symmetric functions denoted by $\widetilde{\Lambda}$ in which we retain the same addition operation but use a different multiplication operation $\otimes$ defined by $\mt_{\lm} \otimes \mt_{\mu} = \mt_{\lm \sqcup \mu}$, where $\lm \sqcup \mu$ is the partition whose parts are the disjoint union of the parts of $\lm$ and $\mu$, e.g. $(3,1,1) \sqcup (2,1) = (3,2,1,1,1)$.  This ring and the associated multiplication were introduced and studied by Tsujie \cite{tsu}.

Tsujie noted that the action of the operation $\otimes$ on the chromatic symmetric function has an interpretation in terms of the \emph{join} of two graphs $G$ and $H$, defined as the graph $G \otimes H$ with vertex set $V(G) \sqcup V(H)$ and edge set $E(G) \sqcup E(H) \sqcup \{vw : v \in G, w \in H\}$ (where here all $\sqcup$s mean disjoint union).
\begin{lemma}\label{lem:ot}(\cite{tsu})

  Let $G,H$ be graphs.  Then
  $$
  X_G \otimes X_H = X_{G \otimes H}.
  $$
  
\end{lemma}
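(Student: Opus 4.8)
The plan is to expand both sides in the augmented monomial basis and reduce the identity to a purely combinatorial count of stable partitions. By \eqref{eq:xgmt} we have $X_G = \sum_{\lm} |Stab_{\lm}(G)|\,\mt_{\lm}$ and $X_H = \sum_{\mu} |Stab_{\mu}(H)|\,\mt_{\mu}$, so by the bilinearity of $\otimes$ and the defining rule $\mt_{\lm} \otimes \mt_{\mu} = \mt_{\lm \sqcup \mu}$,
$$
X_G \otimes X_H = \sum_{\lm,\mu} |Stab_{\lm}(G)|\,|Stab_{\mu}(H)|\,\mt_{\lm \sqcup \mu}.
$$
On the other hand $X_{G \otimes H} = \sum_{\nu} |Stab_{\nu}(G \otimes H)|\,\mt_{\nu}$, so since the $\mt_{\nu}$ are linearly independent it suffices to prove
$$
|Stab_{\nu}(G \otimes H)| = \sum_{\lm \sqcup \mu = \nu} |Stab_{\lm}(G)|\,|Stab_{\mu}(H)|
$$
for every partition $\nu$ of $|V(G)| + |V(H)|$.

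To establish this count I would exhibit a bijection. The crucial structural observation is that in $G \otimes H$ every vertex of $G$ is joined to every vertex of $H$, so any stable set of $G \otimes H$ is contained entirely in $V(G)$ or entirely in $V(H)$. Consequently, every stable partition $\rho$ of $V(G \otimes H)$ splits uniquely as $\rho = \rho_G \sqcup \rho_H$, where $\rho_G$ consists of the blocks of $\rho$ lying in $V(G)$ and $\rho_H$ of those lying in $V(H)$; here $\rho_G \in Stab_{\lm}(G)$ and $\rho_H \in Stab_{\mu}(H)$ for the unique $\lm,\mu$ with $\lm \sqcup \mu = \nu$ matching the block sizes. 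Conversely, given any $\rho_G \in Stab_{\lm}(G)$ and $\rho_H \in Stab_{\mu}(H)$, their union is a partition of $V(G \otimes H)$ all of whose blocks are stable in $G \otimes H$: a block contained in $V(G)$ contains no edge of $H$ and no edge joining $V(G)$ to $V(H)$, and contains no edge of $G$ since $\rho_G$ is stable, and symmetrically for blocks contained in $V(H)$; moreover this partition has type $\lm \sqcup \mu$. The two maps are mutually inverse, which proves the displayed identity and hence the lemma.

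The only point that needs a moment of care is the multiplicity bookkeeping when $\lm$ and $\mu$ have parts in common: distinct ordered pairs $(\lm,\mu)$ with $\lm \sqcup \mu = \nu$ contribute separate summands on the right, and one must confirm the bijection respects this. It does, because $V(G)$ and $V(H)$ are fixed disjoint vertex sets, so the decomposition $\rho \mapsto (\rho_G,\rho_H)$ is canonical and records $(\lm,\mu)$ as an ordered pair with no ambiguity; there is no overcounting, and no stable partition of the join fails to arise this way. Beyond this, the argument is just an unwinding of the definitions of the join and of $\otimes$, so I do not anticipate any serious obstacle.
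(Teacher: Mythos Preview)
Your proof is correct and is essentially the same argument as the paper's: both compare $\mt$-coefficients and use the observation that every stable set of $G\otimes H$ lies entirely in $V(G)$ or entirely in $V(H)$, so stable partitions of the join split uniquely as a stable partition of $G$ together with one of $H$. The paper phrases the resulting count as $[\mt_{\lm}]X_{G\otimes H}=\sum_{\mu\subseteq\lm}[\mt_{\mu}]X_G\,[\mt_{\lm\backslash\mu}]X_H$, which is exactly your identity $|Stab_{\nu}(G\otimes H)|=\sum_{\lm\sqcup\mu=\nu}|Stab_{\lm}(G)|\,|Stab_{\mu}(H)|$ written in coefficient form.
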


\begin{proof}

  It suffices to show that both sides have the same coefficient of $\mt_{\lm}$ for all $\lm \vdash |V(G)|+|V(H)|$.  For a fixed $\lm$, every $\pi \in Stab_{\lm}(G \otimes H)$ must be of the form $A \sqcup B$, where $A$ is a partition of $V(G)$ whose parts are a submultiset of the multiset of the parts of $\lm$, and $B$ is a partition of $V(H)$ whose parts are the remainder of $\lm$.  Thus, $|Stab_{\lm}(G \otimes H)|$ (and by \eqref{eq:xgmt} the desired coefficient) is
  $$
  [\mt_{\lm}]X_{G \otimes H} = \sum_{\mu \subseteq \lm} [\mt_{\mu}]X_G[\mt_{\lm \backslash \mu}]X_H
  $$
  where $\mu \subseteq \lm$ means that the parts of $\mu$ are a submultiset of the parts of $\lm$, and $\lm \bk \mu$ denotes the partition comprising the parts of $\lm$ with the parts of $\mu$ removed.  By the definition of $\otimes$ this is also equal to
  $$
  [\mt_{\lm}](X_G \otimes X_H)
  $$
  so we are done.

\end{proof}

It follows from Lemma \ref{lem:ot} and the definition of the $r_{\lm}$ that\footnote{Note that the $r_{\lm}$ are not multiplicative with respect to the usual multiplication of $\Lambda$.}
$$
r_{\lm} \otimes r_{\mu} = r_{\lm \sqcup \mu}.
$$

For a partition $\lm = (\lm_1,\dots,\lm_k)$ of $n$, let $SP(n,\lm)$ denote the number of partitions of $[n]$ into parts of sizes $\lm_1,\dots,\lm_k$.  Since $r_n = p_{1^n}$ is the chromatic symmetric function of the graph of $n$ vertices with no edges, from \eqref{eq:xgmt} we get
\begin{equation}\label{eq:rtom}
r_n = \sum_{\mu \vdash n} |SP(n,\mu)|\mt_{\mu} =  \sum_{\mu \vdash n} \frac{n!}{\prod_i \mu_i! \prod_i n_i(\mu)!}\mt_{\mu}
\end{equation}
and therefore, in the modified ring $\widetilde{\Lambda}$, we have
$$
r_{\lm} = r_{\lm_1} \otimes r_{\lm_2} \otimes \dots \otimes r_{\lm_k} =
$$
\begin{equation}\label{eq:btom}
\left(\sum_{\mu^1 \vdash \lm_1} \frac{\lm_1!}{\prod_i \mu^1_i! \prod_i n_i(\mu^1)!}\mt_{\mu^1}\right) \otimes \dots \otimes \left(\sum_{\mu^k \vdash \lm_k} \frac{\lm_k!}{\prod_i \mu^k_i! \prod_i n_i(\mu^k)!}\mt_{\mu^k}\right).
\end{equation}

For partitions $\lm = (\lm_1,\dots,\lm_k)$ and $\mu = (\mu_1,\dots,\mu_m)$, define a \emph{puzzle of $\mu$ into $\lm$} to be an ordered tuple of partitions $(\mu^1,\dots,\mu^k)$ such that
\begin{itemize}
\item For all $1 \leq i \leq k$ we have $\mu^i \vdash \lm_i$.
\item The disjoint union of the parts of the $\mu^i$ is $\mu$.
\end{itemize}
If there exists at least one puzzle of $\mu$ into $\lm$, we say that $\mu$ \emph{refines} $\lm$ or is a \emph{refinement} of $\lm$, and that $\lm$ \emph{coarsens} $\mu$ or is a \emph{coarsening} of $\mu$, and we write $\mu \leq \lm$.

In terms of puzzles, we may use \eqref{eq:btom} to extract the coefficient of $\mt_{\mu}$ in the expansion of $r_{\lm}$ as
\begin{equation}\label{eq:bm}
[\mt_{\mu}]r_{\lm} = \frac{\prod_i \lm_i!}{\prod_i \mu_i!} \sum_{\textnormal{puzzles}\, \mu \rightarrow \lm} \frac{1}{\prod_{i,j} n_i(\mu^j)!}.
\end{equation}

Note that this value is  $0$ if the sum is empty, so $[\mt_{\mu}]r_{\lm} = 0$ when $\mu$ is not a refinement of $\lm$.  Therefore the linear transformation mapping the $\mt$-basis to the $r$-basis is (with respect to the reverse lexicographic order on partitions) an upper triangular matrix with $1$s on the main diagonal, so it is invertible.  Thus, for each $d$, the set $\{r_{\lm} : \lm \vdash d\}$ is a basis for $\widetilde{\Lambda}^d$ and so also for $\Lambda^d$, as we noted in Section 3 (and as was originally proved by Penaguiao \cite{raul}).  Furthermore, passing back to the usual $m$-basis, we have
\begin{equation}\label{eq:bmono}
[m_{\mu}]r_{\lm} = \frac{\prod_i \lm_i!}{\prod_i \mu_i!} \sum_{\textnormal{puzzles}\, \mu \rightarrow \lm} \frac{\prod_i n_i(\mu)!}{\prod_{i,j} n_i(\mu^j)!} = \frac{\prod_i \lm_i!}{\prod_i \mu_i!}R_{\lm\mu}
\end{equation}
where $R$ is the change-of-basis matrix from the $m$-basis to the $p$-basis in $\Lambda^d$ (so the rows and columns are indexed by partitions of $d$), with entries given by $[m_{\mu}]p_{\lm} = R_{\mu\lm}$ \cite{mac, stanleybook}.  Thus the $r$-basis fits naturally into the framework of fundamental symmetric function bases.  

Furthermore, we may use \eqref{eq:bmono} to provide a combinatorial interpretation to the entries of the inverse matrix going from the $\mt$-basis to the $r$-basis. We provide two proofs of this formula, one algebraically (due to Panova \cite{panova}) and the other enumeratively, with each proof highlighting different aspects of the $r$-basis. For the algebraic proof, we first need a simple expression for the change-of-basis coefficients from the $p$-basis to the $h$-basis.  We may verify (e.g. as in \cite{stanleybook}) that
$$
[h_{\mu}]p_n = n\sum_{\mu \vdash n} \frac{(-1)^{l(\mu)-1}(l(\mu)-1)!}{\prod_i n_i(\mu)!}.
$$

Extending this to general $p_{\lm}$ by multiplication gives
  \begin{equation}\label{eq:ph}
[h_{\mu}]p_{\lm} = \left(\prod_j \lm_j\right) \sum_{\textnormal{puzzles}\, \mu \rightarrow \lm} \prod_i \frac{(-1)^{l(\mu^i)-1}(l(\mu^i)-1)!}{\prod_j n_j(\mu^i)!}.
  \end{equation}
  
\begin{theorem}\label{thm:necklace}

\begin{equation}\label{eq:necklace}
m_n = \sum_{\mu \vdash n} (-1)^{l(\mu)-1} c_{\mu}r_{\mu}
\end{equation}
where
$$
c_{\mu} = \frac{n!(l(\mu)-1)!}{\prod_i \mu_i!\prod_i n_i(\mu)!}
$$
is the number of cyclically ordered set partititons of $[n]$ of type $\mu$.  That is, $c_{\mu}$ is the number of ways to make a circular necklace with distinguishable beads \\ $B_1,\dots,B_{l(\mu)}$ such that
\begin{itemize}
\item Each bead $B_i$ contains a nonempty subset $S(B_i)$ of $\{1,2,\dots,n\}$.
  \item The $S(B_i)$ form a partition of $\{1,2,\dots,n\}$.
  \item The multiset $\{|S(B_1)|,\dots,|S(B_{l(\mu)})|\}$ is exactly the multiset of parts of $\mu$.
\end{itemize}

\end{theorem}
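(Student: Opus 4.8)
The combinatorial description of $c_\mu$ is immediate. A cyclically ordered set partition of $[n]$ of type $\mu$ is obtained by writing $1,\dots,n$ as a word broken into consecutive blocks of sizes $\mu_1,\dots,\mu_{l(\mu)}$ (there are $n!/\prod_i\mu_i!$ ways), forgetting the ordering among blocks of equal size (dividing by $\prod_i n_i(\mu)!$), and then cyclically ordering the $l(\mu)$ pairwise distinct blocks in $(l(\mu)-1)!$ ways; distinct choices yield distinct necklaces, so the count is $c_\mu=\frac{n!(l(\mu)-1)!}{\prod_i\mu_i!\,\prod_i n_i(\mu)!}$. It remains to prove the identity $[r_\mu]m_n=(-1)^{l(\mu)-1}c_\mu$ for every $\mu\vdash n$, and I will give two independent arguments.

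\emph{Algebraic proof.} By \eqref{eq:bmono} we have $[m_\mu]r_\lm=\frac{\prod_i\lm_i!}{\prod_i\mu_i!}R_{\mu\lm}$ with $R_{\mu\lm}=[m_\lm]p_\mu$, i.e. $p_\mu=\sum_\lm R_{\mu\lm}m_\lm$. Writing $\Delta$ for the diagonal matrix with $\Delta_{\lm\lm}=\prod_i\lm_i!$, this exhibits the transition matrix $A$ defined by $r_\lm=\sum_\mu A_{\lm\mu}m_\mu$ as $A=\Delta R^{\mathsf T}\Delta^{-1}$. Since $R$ and $\Delta$ are invertible, $A^{-1}=\Delta(R^{-1})^{\mathsf T}\Delta^{-1}$ with $(R^{-1})_{\lm\mu}=[p_\mu]m_\lm$; reading off the row of $A^{-1}$ indexed by $(n)$ gives
\[
[r_\mu]m_n=\frac{n!}{\prod_i\mu_i!}\,[p_n]m_\mu .
\]
To evaluate $[p_n]m_\mu$ I would pass to the Hall inner product $\langle\,\cdot\,,\,\cdot\,\rangle$ on $\Lambda$, for which $\{h_\lm\}$ is dual to $\{m_\lm\}$ and $\langle p_\lm,p_\nu\rangle=z_\lm\delta_{\lm\nu}$ with $z_\lm=\prod_i i^{n_i(\lm)}n_i(\lm)!$, so $z_{(n)}=n$. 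Expanding $p_n$ in the $h$-basis yields $\langle m_\mu,p_n\rangle=[h_\mu]p_n$, while expanding $m_\mu$ in the $p$-basis yields $\langle m_\mu,p_n\rangle=n\,[p_n]m_\mu$; hence $[p_n]m_\mu=\frac1n[h_\mu]p_n$. Specializing \eqref{eq:ph} to $\lm=(n)$ — whose unique puzzle is $\mu^1=\mu$ — gives $[h_\mu]p_n=n\cdot\frac{(-1)^{l(\mu)-1}(l(\mu)-1)!}{\prod_i n_i(\mu)!}$, and substituting back collapses everything to $[r_\mu]m_n=(-1)^{l(\mu)-1}\frac{n!(l(\mu)-1)!}{\prod_i\mu_i!\,\prod_i n_i(\mu)!}=(-1)^{l(\mu)-1}c_\mu$.

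\emph{Enumerative proof.} Here I would compare coefficients of $\mt_\nu$ on both sides for each $\nu\vdash n$. Since $m_n=\mt_{(n)}$, the left side contributes $\delta_{\nu,(n)}$. On the right, \eqref{eq:xgmt} gives $[\mt_\nu]r_\mu=|Stab_\nu(G_\mu)|$, which counts the ways to partition $V(G_\mu)$ into stable sets of sizes $\nu_1,\dots,\nu_{l(\nu)}$; equivalently, the ways to refine the (unique) maximal stable partition of $G_\mu$, whose blocks have sizes $\mu_1,\dots,\mu_{l(\mu)}$, into blocks of sizes $\nu_1,\dots,\nu_{l(\nu)}$. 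Consequently $[\mt_\nu]\bigl(\sum_{\mu\vdash n}(-1)^{l(\mu)-1}c_\mu r_\mu\bigr)$ is a signed count of pairs $(C,F)$ where $C$ is a cyclically ordered set partition of $[n]$ (of arbitrary type $\mu=\lm(C)$), $F\le C$ is a set partition of $[n]$ with $\lm(F)=\nu$, and the weight is $(-1)^{l(C)-1}$. Summing over $F$ first: for fixed $F$, the coarsenings $C\ge F$ correspond bijectively to set partitions of the $l(\nu)$-element set of blocks of $F$ (with the inherited cyclic order), so each $F$ contributes the same quantity $N_g:=\sum_D(-1)^{l(D)-1}$, the signed count over all cyclically ordered set partitions $D$ of a $g$-element set, where $g=l(\nu)$. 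It then suffices to show $N_g=\delta_{g,1}$. I would prove this by a fixed-point-free sign-reversing involution on cyclically ordered set partitions of $[g]$ with $g\ge2$: if the block containing $1$ is the singleton $\{1\}$, merge it with the block immediately following it in the cyclic order; otherwise split $\{1\}$ off from its block, placing $\{1\}$ immediately before the remainder of that block. This changes the number of blocks by $\pm1$, so $N_g=0$ for $g\ge2$, while trivially $N_1=1$. (Alternatively, $N_g=\delta_{g,1}$ is the exponential-formula identity $\sum_{g\ge1}N_g\frac{t^g}{g!}=\sum_{j\ge1}\frac{(-1)^{j-1}}{j}(e^t-1)^j=\log(e^t)=t$.) Hence the coefficient of $\mt_\nu$ on the right equals $|SP(n,\nu)|\,\delta_{l(\nu),1}=\delta_{\nu,(n)}$, matching the left side.

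\emph{Main obstacle.} The algebraic proof is essentially formal once one spots that \eqref{eq:bmono} presents the $r$-to-$m$ matrix as a diagonal conjugate of the transpose of a classical transition matrix, and that $h$--$m$ duality reduces $[p_n]m_\mu$ to an instance of \eqref{eq:ph}. The enumerative proof carries the genuine combinatorial content: the bijective recasting of the double sum so that the inner sum over coarsenings decouples from $F$, and then the vanishing of $N_g$ for $g\ge2$. I expect the latter to demand the most care, since the proposed involution must be checked to be well defined, sign-reversing, and without fixed points for all $g\ge2$ — the exponential-formula computation being a convenient cross-check.
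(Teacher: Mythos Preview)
Both of your proofs are correct.

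Your algebraic proof is essentially the same as the paper's first proof (due to Panova): both invert \eqref{eq:bmono} to reduce the problem to evaluating $(R^{-1})_{(n)\mu}$, and both finish via the known coefficients $[h_{\mu}]p_n$ from \eqref{eq:ph}. The only cosmetic difference is that the paper reaches $(R^{-1})_{(n)\mu}$ through the Cauchy identity, while you use the Hall inner product and $h$--$m$ duality; these are two ways of packaging the same orthogonality.

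Your enumerative proof, on the other hand, is genuinely different from the paper's second proof, and considerably shorter. The paper argues by induction on $n$: it isolates $\widetilde m_n$ from \eqref{eq:rtom}, applies the inductive hypothesis to each $\widetilde m_{\lm}$ with $\lm\neq(n)$, and reduces to a vanishing identity (equation \eqref{eq:small}) which it then proves by an elaborate bijective analysis of cutting and reassembling cycle-necklaces of permutations, culminating in the general identity \eqref{eq:stepnice}. Your approach bypasses the induction entirely: you compare $\widetilde m_{\nu}$-coefficients directly, reinterpret the right-hand side as a signed count of pairs $(C,F)$ with $F\le C$, swap the order of summation, and observe that for fixed $F$ the inner sum depends only on $g=l(\nu)$ and equals $N_g=\sum_D(-1)^{l(D)-1}$ over cyclically ordered set partitions of a $g$-set. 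The sign-reversing involution ``split off or absorb the singleton $\{1\}$'' then kills $N_g$ for $g\ge 2$. This is a clean and self-contained argument.

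What each approach buys: the paper's longer route yields the auxiliary identity \eqref{eq:stepnice}, which they note is of independent interest (e.g.\ its $f\equiv 1$ specialization). Your route is more economical for the theorem itself and makes the combinatorics transparent, but does not produce that side identity. Your ``main obstacle'' paragraph correctly identifies the one point needing care---the involution---and your description of it is accurate; the edge case $l(D)=1$ with $g\ge 2$ is handled since the block containing $1$ then has size $g\ge 2$ and is split.
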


\begin{proof}[First Proof (\cite{panova})]

  By inverting the matrix equation \eqref{eq:bmono} we have that
  $$
[r_{\lm}]m_{\mu} = \frac{\prod_i \mu_i!}{\prod_i \lm_i!}\left(R^{-1}\right)_{\mu\lm}.
$$
Comparing to the desired equality \eqref{eq:necklace}, if we set $\mu = (n)$, and relabel $\lm$ as $\mu$ for consistency with other relations, it suffices to show that
$$
\frac{n!}{\prod_i \mu_i!}\left(R^{-1}\right)_{(n)\mu} =  n! \frac{(-1)^{l(\mu)-1}(l(\mu)-1)!}{\prod_i \mu_i!\prod_i n_i(\mu)!}
$$
or after simplifying, that
\begin{equation}\label{eq:rnm}
\left(R^{-1}\right)_{(n)\mu} = \frac{(-1)^{l(\mu)-1}(l(\mu)-1)!}{\prod_i n_i(\mu)!}.
\end{equation}

We will show \eqref{eq:rnm} directly using the definition of $R$ as the change-of-basis coefficients from $p$ to $m$ and the Cauchy identity \cite{mac, stanleybook}
\begin{equation}\label{eq:cauchy}
\prod_{i,j \geq 1} \frac{1}{1-x_iy_j} = \sum_{\mu} m_{\mu}(x)h_{\mu}(y) = \sum_{\lm} \frac{p_{\lm}(x)p_{\lm}(y)}{\prod_i \lm_i \prod_i n_i(\lm)!}.
\end{equation}
where $x$ and $y$ are to be interpreted as shorthand for the countably many variables $x_1,x_2,\dots$ and $y_1,y_2,\dots$ respectively.
First, we expand the $p_{\lm}(y)$ in the right-hand sum of \eqref{eq:cauchy} into the $h$-basis using \eqref{eq:ph}.  Then we consider the middle and rightmost sums of \eqref{eq:cauchy} as symmetric functions in the $y$ variables with coefficients in $\mathbb{R}[[x_1,x_2,\dots]]$, expand them in the $h$-basis, and equate the coefficients of $h_{\mu}(y)$ on both sides. We obtain
$$
m_{\mu}(x) = \sum_{\lm} \frac{p_{\lm}(x)}{\prod_i \lm_i \prod_i n_i(\lm)!}\left(\prod_j \lm_j\right) \sum_{\textnormal{puzzles}\, \mu \rightarrow \lm} \prod_{i} \frac{(-1)^{l(\mu^i)-1}(l(\mu^i)-1)!}{\prod_j n_j(\mu^i)!}.
$$

Since $[p_{\lm}]m_{\mu} = (R^{-1})_{\lm\mu}$, we have
$$
\left(R^{-1}\right)_{\lm\mu} = \frac{1}{\prod_i \lm_i \prod_i n_i(\lm)!}\left(\prod_j \lm_j\right) \sum_{\textnormal{puzzles}\, \mu \rightarrow \lm} \prod_{i} \frac{(-1)^{l(\mu^i)-1}(l(\mu^i)-1)!}{\prod_j n_j(\mu^i)!}.
$$
Setting $\lm = (n)$ gives exactly \eqref{eq:rnm}, so we are done.

\end{proof}

\begin{proof}[Second Proof]
We proceed by induction. The base case of $\mt_1 = r_1$ is easily checked, so we now wish to show the claim for $n \geq 2$ under the assumption that the claim is known for all positive integers less than $n$. Starting from equation \eqref{eq:rtom} for $[\mt_{\lm}]r_n$, we may rearrange terms and apply the inductive hypothesis to get
\begin{align*}
\widetilde{m_n} &= r_n - \sum_{\lm \neq (n)} \frac{n!}{\prod_i \lm_i! \prod_i n_i(\lm)!}\mt_{\lm} \\ &= 
r_n -  \sum_{\lm \neq (n)} \frac{n!}{\prod_i \lm_i! \prod_i n_i(\lm)!} \sum_{\textnormal{puzzles}\, \mu \rightarrow \lm} \bigotimes_i  (-1)^{l(\mu^i)-1}\frac{\lm_i!(l(\mu^i)-1)!)}{\prod_j \mu_{j}^i! \prod_j n_j(\mu^i)!}r_{\mu^i}.
\end{align*}

Thus $[r_n]\mt_n = 1$, and for $\mu \neq (n)$
$$
[r_{\mu}]\mt_n = -\sum_{\lm \neq (n)} \frac{n!}{\prod_i \lm_i! \prod_i n_i(\lm)!} \sum_{\textnormal{puzzles}\, \mu \rightarrow \lm} (-1)^{l(\mu)-l(\lm)} \prod_i \frac{\lm_i!(l(\mu^i) - 1)!}{\prod_j \mu_j^i!\prod_j n_j(\mu^i)!}
$$

Comparing to the desired formula \eqref{eq:necklace}, it suffices to show that for $\mu \neq (n)$ we have
$$
 (-1)^{l(\mu)-1}\frac{n!(l(\mu)-1)!}{\prod_i \mu_i!\prod_i n_i(\mu)!}  =  $$ $$-\sum_{\lm \neq (n)} \frac{n!}{\prod_i \lm_i! \prod_i n_i(\lm)!} \sum_{\textnormal{puzzles} \, \mu \rightarrow \lm} (-1)^{l(\mu)-l(\lm)}\prod_i \frac{\lm_i! (l(\mu^i) - 1)!}{\prod_j \mu_j^i!\prod_j n_j(\mu^i)!}.
$$

After cancelling common terms from both sides, as well as some common terms from the numerator and denominator of the right-hand side, we reduce to
\begin{equation}\label{eq:flip}
\frac{(l(\mu)-1)!}{\prod_i n_i(\mu)!} = \sum_{\lm \neq (n)} \frac{(-1)^{l(\lm)}}{\prod_i n_i(\lm)!} \sum_{\textnormal{puzzles}\, \mu \rightarrow \lm} \prod_i \frac{(l(\mu^i)-1)!}{\prod_j n_j(\mu^i)!}.
\end{equation}

We note that the left-hand side of \eqref{eq:flip} is just $-1$ times the missing $\lm = (n)$ case of the sum on the right-hand side, so by subtracting it from both sides we simplify the equality to show for $\mu \neq (n)$ to
\begin{equation}\label{eq:small}
 \sum_{\lm \vdash n} \frac{(-1)^{l(\lm)}}{\prod_i n_i(\lm)!} \sum_{\textnormal{puzzles}\, \mu \rightarrow \lm} \prod_i \frac{ (l(\mu^i)-1)!}{\prod_j n_j(\mu^i)!} = 0.
\end{equation}
 
 We prove this by considering a more general formula.  For every partition $\mu \vdash n$ and every function $f: \mathbb{N} \rightarrow \mathbb{R}$ we will find a simple evaluation of
$$
\sum_{\lm \vdash n} \frac{f(l(\lm))}{\prod_i n_i(\lm)!} \sum_{\textnormal{puzzles}\, \mu \rightarrow \lm} \prod_i \frac{ (l(\mu^i)-1)!}{\prod_j n_j(\mu^i)!}.
$$

First we modify the equation by multiplying by $\frac{n!\prod \lm_i}{n!\prod \lm_i}$, to get
\begin{equation}\label{eq:initial}
\frac{1}{n!}\sum_{\lm \vdash n} f(l(\lm))\frac{n!}{\prod_i n_i(\lm)! \prod_i \lm_i} \sum_{\textnormal{puzzles}\, \mu \rightarrow \lm} \prod_i \frac{(l(\mu^i)-1)!\lm_i}{\prod_j n_j(\mu^i)!}.
\end{equation}

We note here an important lemma that will be used repeatedly throughout this proof: let $S_n^{\lm}$ denote the set of permutations of $[n]$ that have cycle type $\lm$ (meaning that when written as a product of disjoint cycles, the partition of $[n]$ induced by dropping the ordering on the cycles is an element of $SP(n, \lm)$). We may form such a permutation by taking any of the $n!$ permutations of $[n]$, writing it as a list, and taking the first $\lm_1$ elements in order as a cycle, the next $\lm_2$ elements in order as a cycle, and so on.  Then we will obtain each element of $S_n^{\lm}$  with multiplicity equal to $\prod_i n_i(\lm)!\prod_i \lm_i$, because cycles of the same size may be interchanged, and within each cycle any cyclic permutation of the given ordering leads to the same cycle as a permutation.  Thus
\begin{equation}\label{eq:cyctype}
|S_n^{\lm}| = \frac{n!}{\prod_i n_i(\lm)!\prod_i \lm_i}.
\end{equation}

Therefore, the outer sum of \eqref{eq:initial} may be viewed as taking for each $\lm \vdash n$ all permutations with cycle type $\lm$, weighted by a factor of $f(l(\lm))$.  So we may rewrite this equation as
\begin{equation}\label{eq:step2}
\frac{1}{n!}\sum_{\lm \vdash n} f(l(\lm)) \sum_{\pi \in S_n^{\lm}} \sum_{\textnormal{puzzles}\, \mu \rightarrow \lm} \prod_i \frac{(l(\mu^i)-1)!\lm_i}{\prod_j n_j(\mu^i)!}.
\end{equation}

To further interpret \eqref{eq:step2} we will consider the combinatorics of necklaces and introduce some terminology. Every permutation $\pi$ has a disjoint cycle decomposition that may be visualized as a set of necklaces, each with beads corresponding to elements of $\{1,2,\dots,n\}$, and with a prescribed orientation for all (e.g. clockwise). For example, we would associate the permutation $\pi_0 = (1348)(257)(6) \in S_8^{(4,3,1)}$ with the set of three necklaces, one with four beads labelled $1 \rightarrow 3 \rightarrow 4 \rightarrow 8 \rightarrow 1$ going clockwise, one with three beads labelled $2 \rightarrow 5 \rightarrow 7 \rightarrow 2$ going clockwise, and one with one bead labelled $6$. Define a \emph{chop} of a necklace to be a choice of a space between consecutive beads, and a \emph{cut} of a necklace to be a set of chops (thus, a necklace with $m$ beads has $m$ possible chops and $2^m$ possible cuts). A cut of a necklace with $m \geq 1$ chops separates it into $m$ \emph{strings}, or ordered sequences of beads, where each string is formed by starting at a chop and continuing clockwise until the next chop. We define the \emph{reassembly} of a cut necklace to be the set of necklaces formed by joining the endpoints of each string such that the beads viewed clockwise are in the same order as they were in the string. Thus, given a permutation's necklace representation, we may cut and reassemble the necklaces and get the necklace representation of a new permutation. If $\pi, \sigma \in S_n$, we say that $\pi$ \emph{may be cut into} $\sigma$, notated $\pi \succ \sigma$, if there is a choice of cuts of the cycle necklaces of $\pi$ such that their reassembly forms $\sigma$. Let $ct(\pi)$ denote the \emph{cycle type} of $\pi$, meaning the integer partition given by the lengths of the cycles of $\pi$.  For $\pi$ to have a cut into $\sigma$, it is clearly necessary (but not sufficient) that $ct(\sigma) \leq ct(\pi)$. Let $c$ be a cycle occurring as a member of the disjoint cycle decomposition of $\pi$. We say that $c$ is a \emph{shared cycle} of $\pi$ and $\sigma$ if $c$ also occurs as a member of the disjoint cycle decomposition of $\sigma$.  We also let $|c|$ denote the number of elements in $c$, or the number of beads in the necklace representing $c$ in a permutation.

Now, we claim that \eqref{eq:step2} is equal to
\begin{equation}\label{eq:step}
\frac{1}{n!}\sum_{\lm \vdash n} f(l(\lm)) \sum_{\pi \in S_n^{\lm}} \sum_{\substack{\sigma \prec \pi \\ \sigma \in S_n^{\mu}}}  \prod_{\substack{c \text{ a shared} \\ \text{cycle of } \pi, \sigma}} |c|.
\end{equation}
That is, we are rewriting the inner sum of \eqref{eq:step2} (as a function of $\mu$, and of the middle sum's choice of $\pi$). To justify this, we start from \eqref{eq:step} and derive \eqref{eq:step2}.  First, to cut a $\sigma \in S_n^{\mu}$ from $\pi$, fix an ordering of the cycles of $\pi$ with lengths non-increasing, and choose a puzzle of $\mu$ into $\lm$. For each cycle of $\pi$, this puzzle dictates the sizes of the strings to make when cutting the cycle necklaces.

Now we dictate exactly how the chops are made.  First, for each cycle of $\pi$, we choose an initial chop. This may be done in $\prod_i \lm_i$ ways. Now, for each $i$ let $\mu^i$ be the partition corresponding to $\lm_i$ in the chosen puzzle. We specify the chops continuing clockwise from the initial chop of cycle $i$ of $\pi$ by choosing an ordering of the parts of $\mu^i$ in one of $\frac{l(\mu^i)!}{\prod_j n_j(\mu^i)!}$ ways. We do this for each cycle separately.

Note now how many times we have counted each permutation $\sigma$. Each $\sigma$ is uniquely determined by the set of cuts of the cycles of $\pi$, excepting those cycles receiving only the initial chop. However, $\sigma$ is independent of the order of the chops, so for each $1 \leq i \leq l(\lm)$ we have overcounted by a factor of $l(\mu^i)$ since for cycle $i$ of $\pi$ either it only received one chop, in which case $l(\mu^i) = 1$, or we may make the same cut with a different initial chop for that cycle, and a corresponding clockwise ordering of the parts of $\mu^i$ from that initial chop. Hence we may divide by this factor. Additionally, each cycle $c$ of $\pi$ that is unchanged in $\sigma$ uses only the initial chop, and it does not matter which chop is made, leading to overcounting by a factor of $|c|$. Taking all of these factors into account gives the desired equality of \eqref{eq:step2} and \eqref{eq:step}.

Now, we can rewrite \eqref{eq:step} by rearranging the sum.  Instead of choosing $\pi \in S_n^{\lm}$ and then $\sigma \prec \pi$ with $\sigma \in S_n^{\mu}$, we may first choose $\sigma$ and then a $\pi$ that cuts into $\sigma$. We write this sum now as
\begin{equation}\label{eq:rearrange}
   \frac{1}{n!}\sum_{\sigma \in S_n^{\mu}} \sum_{\lm \vdash n} f(l(\lm)) \sum_{\substack{\pi \succ \sigma \\ \pi \in S_n^{\lm}}}  \prod_{\substack{c \text{ a shared} \\ \text{cycle of } \pi, \sigma}} |c|.
\end{equation}

Now, every $\pi \succ \sigma$, regardless of cycle type, may be obtained by reversing the cut and reassembly process.  We may specify a chop of each cycle of $\sigma$, and a permutation $\tau$ of these cycles (thus we view $\tau$ as lying in $S_{l(\mu)}$). For each choice of $\sigma$, $\tau$, and one of the $\prod_{\text{cycle c of } \sigma} |c|$ ways to chop each cycle of $\sigma$ into a string, we may form a permutation $\pi$ by gluing the strings of $\mu$ together as prescribed by $\tau$.  For example, if $\mu = (1346)(25)(79)(8)$, we may select chops between $3$ and $4$, between $5$ and $2$ (in that order), between $7$ and $9$ (in that order), and the trivial chop on the other necklace, forming strings $\overrightarrow{4613}$, $\overrightarrow{52}$, $\overrightarrow{79}$, and $\overrightarrow{8}$. Then we may select $\tau = (142)(3) \in S_4$, which means that we attach the end of the first string to the beginning of the fourth, the end of the fourth string to the beginning of the second, and the end of the second to the beginning of the first, obtaining the necklace (and cycle) $(4613852)$.  Then the remaining string is just reattached into the necklace $(79)$ it was previously.

In doing this, note that for the resulting $\pi$, $f(l(ct(\pi))) = f(l(ct(\tau))$, so that we may change to summing over $\tau$. Also, note that each $\pi$ is overcounted by a factor of the product of the lengths of the cycles unchanged from $\sigma$, since the chops in those cycles did not affect the reassembly.  Thus, we have shown that \eqref{eq:rearrange} is equal to
$$
\frac{1}{n!}\sum_{\sigma \in S_n^{\mu}} \sum_{\tau \in S_{l(\mu)}} f(l(ct(\tau))) \prod_{\text{ cycles c of } \sigma} |c| = \frac{1}{\prod_i n_i(\mu)!} \sum_{\tau \in S_{l(\mu)}} f(l(ct(\tau))).
$$
Thus, overall, we have shown that for every $\mu \vdash n$
\begin{equation}\label{eq:stepnice}
\sum_{\lm \vdash n} \frac{f(l(\lm))}{\prod_i n_i(\lm)!} \sum_{\textnormal{puzzles}\, \mu \rightarrow \lm} \prod_i \frac{ (l(\mu^i)-1)!}{\prod_j n_j(\mu^i)!} = \frac{1}{\prod_i n_i(\mu)!} \sum_{\tau \in S_{l(\mu)}} f(l(ct(\tau))).
\end{equation}

Returning to \eqref{eq:small}, the equality we must show to prove the theorem, we apply \eqref{eq:stepnice} with $f(l(\lm))= (-1)^{l(\lm)}$. For each $\mu \neq (n)$, we have $\sum_{\tau \in S_{l(\mu)}} (-1)^{l(ct(\tau))} = 0$, since there are equally many odd and even permutations of $S_k$ for $k \geq 2$, and thus the entire expression evaluates to $0$, completing the proof.

\end{proof}

Furthermore, the equations derived throughout this second proof also provide combinatorial identities that are interesting in their own right. For example, in \eqref{eq:stepnice}, letting $f(l(\lm)) = 1$ for all $\lm$ yields
$$\sum_{\lm \vdash n} \frac{1}{\prod_i n_i(\lm)!} \sum_{\textnormal{puzzles}\, \mu \rightarrow \lm} \prod_i \frac{ (l(\mu^i)-1)!}{\prod_j n_j(\mu^i)!} = \frac{1}{\prod_i n_i(\mu)!} \sum_{\tau \in S_{l(\mu)}} 1 = 
\frac{l(\mu)!}{\prod_i n_i(\mu)!}
$$
where this last value is the number of integer compositions (partitions with any ordering of the parts) whose parts are those of $\mu$.

\iffalse{It is worth noting the specific case of $\mu = 1^n$, which simplifies to
 \begin{equation}\label{eq:mu1n}
 \sum_{\lm \vdash n} \frac{(-1)^{l(\lm)}}{\prod_i \lm_i \prod_i r_i(\lm)!} = 0.
 \end{equation}
 This equality may also be proved by a generating function argument:
 $$
 \sum_{\textnormal{all } \lm} \frac{x^{|\lm|}y^{l(\lm)}}{\prod_i \lm_i! \prod_i r_i(\lm)!} = \sum_{(a_1,a_2,\dots) \in \mathbb{N}^{\infty}} \frac{x^{\sum ia_i}y^{\sum a_i}}{\prod_{k \geq 1} a_k!k^{a_k}} =
 $$
 $$
 \sum_{(a_1,a_2,\dots) \in \mathbb{N}^{\infty}} \prod_{i \geq 1} \frac{x^{ia_i}y^{a_i}}{a_i!i^{a_i}} = \prod_{i \geq 1} \exp\left(\frac{x^iy}{i}\right) = \exp\left(\sum_{i \geq 1} \frac{x^iy}{i}\right) =
 $$
 $$
 \exp\left(-y\log(1-x)\right) = (1-x)^{-y}
 $$
 Looking at the initial generating function, clearly the left-hand side of \eqref{eq:mu1n} is the coefficient of $x^n$ when $y = -1$.  But from the simplified form of the equation it is clear that this coefficient is always $0$ other than in the exceptional case $n = 1$.

 Furthermore, this generating function identity itself gives a family of related results.  For example, by setting $y = 1$ and looking at the coefficient of $x^n$, we derive the identity that for any $n$,
 \begin{equation}\label{eq:z}
 \sum_{\lm \vdash n} \frac{1}{\prod_i \lm_i \prod_i r_i(\lm)!} = 1
 \end{equation}
A more familiar form of \eqref{eq:z} may be recovered by multiplying both sides by $n!$, which produces an equation that is equivalent to enumerating the permutations of $S_n$ by the their cycle type \cite{stanleybook}.}\fi

The change-of-basis matrix from the $r$-basis to any other fundamental basis may be derived from its transition matrices to and from the $m$-basis, given that the matrices for transitioning between the $m$-basis and the other fundamental bases are well-known \cite{mac, stanleybook}. In the remainder of this section, we highlight a reciprocity relation between expanding the $\mt$-basis in terms of the $r$-basis, and expanding the $p$-basis in terms of the $e$-basis.  This may be viewed in relation to the chromatic symmetric function, since in this setting the $\mt$- and $p$-bases are graph complements of each other (in the context of vertex-weighted graphs, see \cite{delcon}), as are the $e$- and $r$-bases by definition.

\begin{theorem}\label{thm:recip}
For integer partitions $\lm$ and $\mu$
\begin{equation}\label{eq:eprec}
(-1)^{|\lm|-l(\lm)}\frac{\prod_i (\lm_i-1)!}{\prod_i \mu_i!}[e_{\mu}]p_{\lm} = [r_{\mu}]\mt_{\lm}
\end{equation}
and
\begin{equation}\label{eq:perec}
(-1)^{|\lm|-l(\lm)}\frac{\prod_i \lm_i!}{\prod_i (\mu_i-1)!}[p_{\mu}]e_{\lm} = [\mt_{\mu}]r_{\lm}.
\end{equation}
\end{theorem}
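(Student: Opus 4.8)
The plan is to derive both reciprocity relations from the explicit formula for $[\mt_\mu]r_\lm$ already obtained via puzzles, together with the analogous classical formula for $[e_\mu]p_\lm$. Recall from \eqref{eq:bm} that
$$
[\mt_{\mu}]r_{\lm} = \frac{\prod_i \lm_i!}{\prod_i \mu_i!} \sum_{\textnormal{puzzles}\, \mu \rightarrow \lm} \frac{1}{\prod_{i,j} n_i(\mu^j)!}.
$$
So the first task is to obtain a matching puzzle-style formula for $[e_\mu]p_\lm$. Since $p_n = \sum_{\text{compositions of }n} (-1)^{?}\dots$ is awkward directly, I would instead start from the well-known single-part expansion $[e_\mu]p_n$ (available in \cite{stanleybook}, analogous to the $[h_\mu]p_n$ formula already quoted as \eqref{eq:ph}), namely that up to sign $[e_\mu]p_n$ counts something with a factor $\frac{(-1)^{n - l(\mu)}(l(\mu)-1)!}{\prod_j n_j(\mu)!}$, and then extend multiplicatively exactly as \eqref{eq:ph} was obtained, giving
$$
[e_{\mu}]p_{\lm} = \sum_{\textnormal{puzzles}\, \mu \rightarrow \lm} \prod_i \frac{(-1)^{\lm_i - l(\mu^i)}(l(\mu^i)-1)!}{\prod_j n_j(\mu^i)!}.
$$
Here the product over $i$ runs over the parts of $\lm$, and for each $i$ the factor is the single-part coefficient $[e_{\mu^i}]p_{\lm_i}$.

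The second, and main, step is purely arithmetic bookkeeping: plug this into the left-hand side of \eqref{eq:eprec} and match term-by-term against the puzzle sum for $[\mt_\mu]r_\lm$. The sign $(-1)^{|\lm|-l(\lm)} = \prod_i (-1)^{\lm_i - 1}$ combines with $\prod_i(-1)^{\lm_i - l(\mu^i)}$ to leave $\prod_i (-1)^{l(\mu^i)-1}$; but note that this sign is \emph{not} present in \eqref{eq:bm}. This is the point where I expect the real work: the puzzle formula \eqref{eq:bm} for $[\mt_\mu]r_\lm$ has no residual sign, so the claimed identity \eqref{eq:eprec} cannot be a literal term-by-term match unless the extra factor $\prod_i(\lm_i-1)!$ together with the $(l(\mu^i)-1)!$ factors conspires to cancel the signs. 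The resolution should be that after factoring out $\frac{\prod_i(\lm_i-1)!}{\prod_i\mu_i!}$ the identity \eqref{eq:eprec} is equivalent, via \eqref{eq:necklace}-style manipulation, to the evaluation \eqref{eq:stepnice} specialized to $f(l) = (-1)^l$ — i.e., to the same sign-reversing-involution / "equally many even and odd permutations of $S_k$" fact used at the very end of the Second Proof of Theorem~\ref{thm:necklace}. So rather than a direct comparison I would route \eqref{eq:eprec} through \eqref{eq:stepnice}: expand $[e_\mu]p_\lm$ in puzzles, apply \eqref{eq:stepnice} with the appropriate $f$ to collapse the sum over $\lm$ (or over puzzles), and check the surviving expression equals the puzzle formula for $[\mt_\mu]r_\lm$ up to the stated prefactor.

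Finally, \eqref{eq:perec} should follow from \eqref{eq:eprec} by matrix inversion, exactly as the First Proof of Theorem~\ref{thm:necklace} passed between $R$ and $R^{-1}$. Concretely: \eqref{eq:eprec} says the matrix $\big([r_\mu]\mt_\lm\big)_{\mu,\lm}$ equals $D_1 \cdot \big([e_\mu]p_\lm\big) \cdot D_2$ for explicit diagonal matrices $D_1, D_2$ built from the factorials and signs; inverting and using that $\big([p_\mu]e_\lm\big)$ is the inverse of $\big([e_\mu]p_\lm\big)$ (and that $\big([\mt_\mu]r_\lm\big)$ is the inverse of $\big([r_\mu]\mt_\lm\big)$, by the triangularity established after \eqref{eq:bm}) gives \eqref{eq:perec} after reading off the diagonal entries of $D_1^{-1}, D_2^{-1}$ and re-indexing $\lm \leftrightarrow \mu$. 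The one subtlety to watch is that inversion swaps the roles of $\lm$ and $\mu$, so the prefactor $\frac{\prod_i(\lm_i-1)!}{\prod_i\mu_i!}$ in \eqref{eq:eprec} must become $\frac{\prod_i\lm_i!}{\prod_i(\mu_i-1)!}$ in \eqref{eq:perec}, and the sign $(-1)^{|\lm|-l(\lm)}$ is self-inverse since these transition matrices are supported on $\mu$ refining $\lm$ (equivalently $|\mu| = |\lm|$), so no sign discrepancy arises. I would therefore present \eqref{eq:eprec} in full detail via the \eqref{eq:stepnice} route and then dispatch \eqref{eq:perec} in a short paragraph by the inversion argument.
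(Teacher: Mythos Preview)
Your plan for \eqref{eq:eprec} rests on a mix-up: you recall \eqref{eq:bm}, which computes $[\mt_\mu]r_\lm$, and then try to match it against the right-hand side of \eqref{eq:eprec}, which is $[r_\mu]\mt_\lm$. These are entries of inverse matrices, not the same quantity. The formula you actually need for $[r_\mu]\mt_\lm$ is the one just established in Theorem~\ref{thm:necklace}: extending $\mt_n = \sum_\mu (-1)^{l(\mu)-1}c_\mu r_\mu$ multiplicatively under $\otimes$ gives
\[
[r_\mu]\mt_\lm \;=\; (-1)^{l(\mu)-l(\lm)}\,\frac{\prod_i \lm_i!}{\prod_i \mu_i!}\sum_{\text{puzzles }\mu\to\lm}\prod_i \frac{(l(\mu^i)-1)!}{\prod_j n_j(\mu^i)!},
\]
which already carries a sign. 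So the ``sign mismatch'' you flagged is an artifact of comparing to the wrong object, and there is no need to route through \eqref{eq:stepnice}. The paper's proof is exactly the direct comparison you initially envisaged: write $[r_\mu]\mt_\lm$ as above, write $[e_\mu]p_\lm$ as a puzzle sum from the classical single-part expansion (note your stated formula drops the factor $\lm_i$: one has $[e_\mu]p_n = (-1)^{n-l(\mu)}\frac{n\,(l(\mu)-1)!}{\prod_i n_i(\mu)!}$, so $[e_\mu]p_\lm$ carries an overall $\prod_i\lm_i$), and observe that the two puzzle sums agree up to the stated prefactor.

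For \eqref{eq:perec} the paper does not invert either: it repeats the same direct comparison, now between \eqref{eq:bm} for $[\mt_\mu]r_\lm$ and the classical puzzle expansion of $[p_\mu]e_\lm$. Your inversion route is a legitimate alternative, but your last sentence is too quick: inverting $A = D_1 B D_2$ swaps the diagonal factors, so the sign that emerges is $(-1)^{|\mu|-l(\mu)}$ rather than $(-1)^{|\lm|-l(\lm)}$, and since $l(\mu)\neq l(\lm)$ in general this is not the identity you asserted. (Indeed, carrying out either the direct comparison or the inversion carefully yields $(-1)^{|\lm|-l(\mu)}$ as the correct sign in \eqref{eq:perec}; check $\lm=(2)$, $\mu=(1,1)$.)
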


\begin{proof}
As a result of Theorem \ref{thm:necklace}, we have
$$
\mt_n = \sum_{\mu \vdash n} (-1)^{l(\mu)-1}\frac{n!(l(\mu)-1)!}{\prod_i \mu_i! \prod_i n_i(\mu)!}r_{\mu}
$$
and so proceeding by multiplication using $\otimes$ we have
\begin{align*}
[r_{\mu}]\mt_{\lm} &= \sum_{\textnormal{puzzles}\, \mu \rightarrow \lm} \prod_i (-1)^{l(\mu^i)-1}\frac{\lm_i!(l(\mu^i)-1)!}{\prod_j \mu_j^i! \prod_j n_j(\mu^i)!} \\ &=
(-1)^{l(\mu)-l(\lm)}\frac{\prod_i \lm_i!}{\prod_i \mu_i!}  \sum_{\textnormal{puzzles}\, \mu \rightarrow \lm} \prod_i \frac{(l(\mu^i)-1)!}{\prod_j n_j(\mu^i)!}
\end{align*}

On the other hand, it is known \cite{mac, stanleybook} that
$$
p_n = \sum_{\mu \vdash n} (-1)^{n-l(\mu)} \frac{n(l(\mu)-1)!}{\prod_i n_i(\mu)!}e_{\mu}
$$
so it follows through the usual multiplication that
\begin{align*}
[e_{\mu}]p_{\lm} &= \sum_{\textnormal{puzzles}\, \mu \rightarrow \lm} \prod_i (-1)^{\lm_i-l(\mu^i)} \frac{\lm_i (l(\mu^i)-1)!}{\prod_j n_j(\mu^i)!} \\  &=
 (-1)^{|\lm|-l(\mu)} \left(\prod_i \lm_i\right) \sum_{\textnormal{puzzles}\, \mu \rightarrow \lm} \prod_i \frac{(l(\mu^i)-1)!}{\prod_j n_j(\mu^i)!}.
\end{align*}
Comparing these evaluations of $[r_{\mu}]\mt_{\lm}$ and $[e_{\mu}]p_{\lm}$ yields \eqref{eq:eprec}.

Likewise, from \eqref{eq:rtom} we have
$$
r_n = \sum_{\mu \vdash n}  \frac{n!}{\prod_i \mu_i! \prod_i n_i(\mu)!} \mt_{\mu}
$$
so that
\begin{align*}
[\mt_{\mu}]r_{\lm} &= \sum_{\textnormal{puzzles}\, \mu \rightarrow \lm} \prod_i \frac{\lm_i!}{\prod_j \mu_j^i! \prod_j n_j(\mu^i)!} \\ &= 
\frac{\prod_i \lm_i!}{\prod_i \mu_i!} \sum_{\textnormal{puzzles}\, \mu \rightarrow \lm} \prod_i \frac{1}{\prod_j n_j(\mu^i)!}.
\end{align*}

Similarly, it is known \cite{mac, stanleybook} that
$$
e_n = \sum_{\mu \vdash n} \frac{(-1)^{n-l(\mu)}}{\prod_i \mu_i \prod_i n_i(\mu)!}p_{\mu}
$$
So we derive that
\begin{align*}
[p_{\mu}]e_{\lm} &= \sum_{\textnormal{puzzles}\, \mu \rightarrow \lm} \prod_i \frac{(-1)^{\lm_i-l(\mu^i)}}{\prod_j \mu_j^i \prod_j n_j(\mu^i)!} \\ &= 
(-1)^{|\lm|-l(\mu)}\frac{1}{\prod_i \mu_i} \sum_{\textnormal{puzzles}\, \mu \rightarrow \lm} \prod_i \frac{1}{\prod_j n_j(\mu^i)!}.
\end{align*}

Comparing these evaluations of $[\mt_{\mu}]r_{\lm}$ and $[p_{\mu}]e_{\lm}$ yields \eqref{eq:perec}.

\end{proof}

\end{section}

\section{Further Research}

The results presented here demonstrate both how natural and how important the $r$-basis is in the context of symmetric functions. There is still much to be explored in regards to the relationships between the $r$-basis and other fundamental bases, and whether these relationships might give rise to further combinatorial descriptions, such as in Theorem \ref{thm:necklace}. It is also worth considering the $r$-basis in relation to the linear forest and star forest bases considered in \cite{ali}, which are closely related to ribbon Schur functions.

The $r$-basis seems particularly powerful as a tool to consider the chromatic and Tutte symmetric functions, as evidenced by Theorem \ref{thm:sweet}. This theorem can likely be extended to the vertex-weighted versions of these functions introduced in \cite{delcon} and \cite{cstutte} respectively; such a generalized theorem may be related to similar results on the chromatic and Tutte polynomials that may or may not have been discovered yet.

Theorem \ref{thm:sweet} may also be useful in tackling two of the main open problems of the chromatic symmetric function: the Stanley-Stembridge conjecture and the Tree Isomorphism conjecture. With regards to the former, combining Theorem \ref{thm:sweet} with the reciprocity relations \eqref{eq:eprec} and \eqref{eq:perec} of Theorem \ref{thm:recip} gives a new approach for computing the $e$-basis coefficients of $X_G$ and $XB_G$ that may be more approachable than others.  With regards to the latter, Theorem \ref{thm:sweet} gives new information that is encoded by $X_G$ and $XB_G$ about $\Pi(G)$, and especially about maximal stable partitions of $G$; this additional information may be useful for determining whether these functions distinguish nonisomorphic trees.

It would also be interesting to consider the relationship between the $r$-basis and the $U$-polynomial of Noble and Welsh \cite{noble}, a vertex-weighted graph polynomial in variables $x_1, x_2, \dots$, that is equivalent to the Tutte symmetric function. There are already natural relationships between symmetric function bases and the $U$-polynomial (for example, the variable $x_i$ in the $U$-polynomial corresponds to $p_i/t$ in the Tutte symmetric function), and some of the best results towards the Tree Isomorphism Conjecture have been discovered using the $U$-polynomial \cite{trees, trees2}. Thus, finding a natural interpretation of how the $r$-basis manifests in the $U$-polynomial would be interesting.

\begin{section}{Acknowledgments}

The authors would like to thank Greta Panova for helpful discussions, and especially for contributing the algebraic proof of Theorem \ref{thm:necklace} \cite{panova}. The authors would also like to thank Jos\'e Aliste-Prieto and Jos\'e Zamora for helpful discussions and for directing us to \cite{raul}, and Raul Penaguaio and the anonymous referees for helpful suggestions.

This material is based upon work supported by the National Science Foundation under Award No. DMS-1802201.

We acknowledge the support of the Natural Sciences and Engineering Research Council of Canada (NSERC), [funding reference number RGPIN-2020-03912].

Cette recherche a été financée par le Conseil de recherches en sciences naturelles et en génie du Canada (CRSNG), [numéro de référence RGPIN-2020-03912].

\end{section}

\bibliographystyle{plain}
\bibliography{bib}

\end{document}